\newtheorem{G1}{Proposition}
\newtheorem{G2}[G1]{Proposition}
\newtheorem{G3}[G1]{Theorem}
\newtheorem{G31}[G1]{Theorem}
\newtheorem{IMO}[G1]{Corollary}
\newtheorem{IMO1968}[G1]{Corollary}
\newtheorem{G2C1}[G1]{Corollary}
\newtheorem{G2C2}[G1]{Corollary}
\newtheorem{G2C3}[G1]{Corollary}
\newtheorem{Lemma 1}[G1]{Lemma}
\newtheorem{Lemma 2}[G1]{Lemma}
\newtheorem{Lemma 3}[G1]{Lemma}
\title{ON SOME GENERALIZATIONS TO FLOOR FUNCTION IDENTITIES OF RAMANUJAN}
\author{Andrzej Kukla \and Sai Teja Somu}
\newcommand{\F}[1]{\left\lfloor#1\right\rfloor}
\date{\today}
\begin{document}
\maketitle
\begin{abstract}
    We give some generalizations to three identities of Srinivasa Ramanujan involving greatest integer function.
\end{abstract}

\section{Introduction}
Let $\lfloor x \rfloor$ denote greatest integer less than or equal to $x$, Ramanujan proposed three interesting identities involving greatest integer function as a problem to the \textit{Journal of the Indian Mathematical Society}. The problem (See \cite{Ramanujan}) says
\emph{If $n$ is any positive integer, prove that}
\begin{align}
    &\quad\F{\frac n3}+\F{\frac{n+2}6}+\F{\frac{n+4}6}=\F{\frac n2}
    +\F{\frac{n+3}6}\\
    &\quad\F{\frac12+\sqrt {n+\frac12}} =\F{\frac12+\sqrt {n+\frac14}}\\
    &\quad \F{\sqrt{n}+\sqrt{n+1}} =\F{\sqrt {4n+2}}.
\end{align}
Chen in \cite{(ii)} gave short proofs of (2) and (3) and gave some conjectures which were generalizations of (2) and (3).  
We give few generalizations of all the identities (1), (2), (3).

\begin{G1} 
For all integers $a,b$ such that $a\neq 0,\ b>0$ and any real number $x$ we have
\begin{equation}\F{\frac{x}{a}}=\sum_{i=0}^{b-1}\F{\frac{x+ia}{ab}}.\end{equation}
\end{G1}

We will show in section 2 how Proposition 1 is a direct consequence of the Hermite's identity (See \cite{Hermite}), and then show how the identity (1) can be proven from Proposition 1, we also give corollaries to Proposition 1. Next in section 3, we prove the following proposition, and show how identity (2) and other corollaries are derived from Proposition 2.  Let set of natural numbers be set of all positive integers.

\begin{G2}
  Let $f:\mathbb{R}\rightarrow\mathbb{R}$ be a strictly increasing (resp. decreasing) function  whose restriction to integers is in integers, such that the equation
$$f(x)\equiv y \text{ (mod }n),\quad 1<n\in\mathbb{N}$$
has no solutions in integers for $y\in\{a,a+1,\cdots,b-1,b\},\  a\leq b,\ a,b\in\mathbb{Z}$. Then 
$$\F{f^{-1}(kn+a-1)}=\F{f^{-1}(kn+a)}=\cdots=\F{ f^{-1}(kn+b-1)}=\F{ f^{-1}(kn+b)}$$
$$\left(\text{resp. }\F{f^{-1}(kn+a)}=\F{ f^{-1}(kn+a+1)}=\cdots=\F{ f^{-1}(kn+b)}=\F{ f^{-1}(kn+b+1)}\right)$$
where $k$ is an integer such that $f^{-1}$ exists for all $[kn+a-1,kn+b]\cap\mathbb{Z}$ ($\text{resp. } [kn+a,kn+b+1]\cap\mathbb{Z}$).
\end{G2}

In Section 4, we consider two identities of the form (3), and prove theorems 3 and 4.

\begin{G3}
 For all natural numbers $n,k>1$, except for finitely many exceptions of the form $n=\F{\left(\frac{3}{2}\right)^k}$ we have 
\[\F{\sqrt[k]{n}+\sqrt[k]{n+1}}=\F{2\sqrt[k]{n+\frac 12}}.\]
\end{G3}
For proving that there are only finitely many exceptions we used a result of Mahler\cite{Mahler}.

\begin{G31}
For any natural numbers $l,k$ and $x_1,x_2,\cdots,x_l$ such that there exists a prime such that $p\ |\ l,\ p^k\nmid l$, $p\nmid (x_1+x_2+\cdots+x_l)$ then if $n\geq\frac{l^{k-1}(x_1^2+\cdots+x_l^2)}{2}$ we have
$$\F{\sqrt[k]{n+x_1}+\sqrt[k]{n+x_2}+\cdots+\sqrt[k]{n+x_l}}=\F{l\sqrt[k]{n+\frac{x_1+x_2+\cdots+x_l}{l}}}.$$
\end{G31}

\section{Generalization of (1)}
We begin by proving Proposition 1,
$$\F{\frac{x}{a}}=\sum_{i=0}^{b-1}\F{\frac{x+ia}{ab}}.$$
\begin{proof}
From Hermite's identity (See \cite{Hermite}), we have for any real number $t$, natural number $b$,
\[  \sum_{i=0}^{b-1}\F{t+\frac{i}{b}}=\F{bt},\] substitute $t=\frac{x}{ab}$ to get the desired identity,
$$\F{\frac{x}{a}}=\sum_{i=0}^{b-1}\F{\frac{x+ia}{ab}}.$$
\end{proof}
Note that when $a=1$, it is the standard floor function identity $\F{x} = \sum_{i=0}^{b-1}\F{\frac{x+i}{b}}.$

Now let us see a proof of Ramanujan's identity (1) using Proposition 1.
\subsection{Proof of (1) using Proposition 1}
\begin{proof}
Set $a=2,\ b=3$ and $a=3,\ b=2$, $x=n$ in Proposition 1 to get:
\begin{align*}
&\F{\frac n2}=\F{\frac{n}{6}}+\F{\frac{n+2}{6}}+\F{\frac{n+4}{6}},\\
&\F{\frac{n}{6}}+\F{\frac{n+3}{6}} =\F{\frac{n}{3}} .
\end{align*}
Adding both the equations and canceling out $\lfloor\frac{n}{6}\rfloor$ on both sides we get (1).
\end{proof}
\subsection{Corollaries of Proposition 1}
 We have the following corollary of Proposition 1.
    
    \begin{IMO}
     For any $x\in\mathbb{R}$ and natural number $n$ the following identity is true
    \[\sum_{j=0}^{\infty}\sum_{i=1}^{n-1}\F{\frac{x}{n^{j+1}}+\frac{i}{n}}=\left\{\begin{array}{ll}
        \F{x} &\text{for }x\geq 0  \\
        \F{x}+1 &\text{for }x< 0.
    \end{array}\right.\]
    \end{IMO}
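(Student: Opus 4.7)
The plan is to iterate Hermite's identity, which is the $a=1$, $b=n$ case of Proposition 1, and telescope. First I would write Proposition 1 with $a=1$ and $b=n$ as
\[\F{x}=\sum_{i=0}^{n-1}\F{\tfrac{x+i}{n}}=\F{\tfrac{x}{n}}+\sum_{i=1}^{n-1}\F{\tfrac{x}{n}+\tfrac{i}{n}},\]
so that, isolating the $i=0$ term,
\[\F{x}-\F{\tfrac{x}{n}}=\sum_{i=1}^{n-1}\F{\tfrac{x}{n}+\tfrac{i}{n}}.\]

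Next I would apply the same identity with $x$ replaced by $x/n^{j}$ for each $j\geq 0$, obtaining
\[\F{\tfrac{x}{n^{j}}}-\F{\tfrac{x}{n^{j+1}}}=\sum_{i=1}^{n-1}\F{\tfrac{x}{n^{j+1}}+\tfrac{i}{n}},\]
and sum over $j=0,1,\dots,N$. The left-hand side telescopes to $\F{x}-\F{x/n^{N+1}}$, giving the partial-sum identity
\[\F{x}-\F{\tfrac{x}{n^{N+1}}}=\sum_{j=0}^{N}\sum_{i=1}^{n-1}\F{\tfrac{x}{n^{j+1}}+\tfrac{i}{n}}.\]

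Finally I would let $N\to\infty$. Since $x/n^{j+1}\to 0$, for all sufficiently large $j$ every term $x/n^{j+1}+i/n$ with $1\leq i\leq n-1$ lies in the open interval $(0,1)$, so the inner sum vanishes from some point on; in particular the double series on the right converges and equals the limit of the partial sums. For the boundary term, if $x\geq 0$ then $0\leq x/n^{N+1}<1$ for large $N$, so $\F{x/n^{N+1}}=0$, whereas if $x<0$ then $-1<x/n^{N+1}<0$ for large $N$, so $\F{x/n^{N+1}}=-1$. Substituting these two values yields $\F{x}$ and $\F{x}+1$ respectively, which is exactly the stated piecewise formula.

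The only real subtlety, and hence the main thing to be careful about, is the determination of $\lim_{N\to\infty}\F{x/n^{N+1}}$ in the two sign regimes for $x$, because that is what produces the $+1$ correction in the negative case; the rest is a direct telescoping of Hermite's identity and a convergence argument based on the fact that the tail of the double sum is eventually zero term-by-term.
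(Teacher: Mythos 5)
Your proposal is correct and follows essentially the same route as the paper: both rewrite the inner sum via Proposition 1 (you apply the $a=1$, $b=n$ case to $x/n^j$; the paper substitutes $a=n^j$, $b=n$ directly, which is the identical computation), telescope over $j$, and evaluate $\lim_{N\to\infty}\F{x/n^{N+1}}$ as $0$ or $-1$ according to the sign of $x$. Your added remark justifying convergence of the infinite double sum (the tail vanishes term by term) is a small refinement the paper leaves implicit.
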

    \begin{proof}
     We start by noticing that by Proposition 1 we can rewrite the inside sum as
    $$\sum_{i=1}^{n-1}\F{\frac{x}{n^{j+1}}+\frac{i}{n}}=\sum_{i=0}^{n-1}\F{\frac{x+in^j}{n^{j+1}}}-\F{\frac{x}{n^{j+1}}}=\F{\frac{x}{n^j}}-\F{\frac{x}{n^{j+1}}}.$$
    This implies that
    $$\sum_{j=0}^{\infty}\sum_{i=1}^{n-1}\F{\frac{x}{n^{j+1}}+\frac{i}{n}}=\lim_{k\rightarrow \infty}\sum_{j=0}^{k}\left(\F{\frac{x}{n^j}}-\F{\frac{x}{n^{j+1}}}\right)=\lim_{k\rightarrow \infty}\bigg(\F{x }-\F{\frac{x}{n^{k+1}}}\bigg).$$
    As $\lim_{k\rightarrow \infty}\lfloor\frac{x}{n^{k+1}}\rfloor=0$, if $x\geq 0$ and $\lim_{k\rightarrow \infty}\lfloor\frac{x}{n^{k+1}}\rfloor=-1$ otherwise, we have \[\sum_{j=0}^{\infty}\sum_{i=1}^{n-1}\F{\frac{x}{n^{j+1}}+\frac{i}{n}}=\left\{\begin{array}{ll}
        \F{x} &\text{for }x\geq 0  \\
        \F{x}+1 &\text{for }x< 0.
    \end{array}\right.\]
    \end{proof}
    
     Corollary 5 is a generalization of question 6 from the 1968 International Math Olympiad, which can be proved by setting $n=2$, and $x=m$, when $m$ is natural number
        
        \begin{IMO1968}
        For every natural number $m$, we have
        $$\sum_{k=0}^{\infty}\F{\frac{m+2^k}{2^{k+1}}}=\F{\frac{m+1}{2}}+\F{\frac{m+2}{4}}+\F{\frac{m+4}{8}}+\cdots =m.$$
        \end{IMO1968}

\section{Generalization of (2)}
Now we will give a proof of Proposition 2.

\begin{proof}
Let's assume that the function $f$ is strictly increasing (proof of decreasing case is analogous).  As the equation $$f(x)\equiv y \text{ (mod }n),\quad 1<n\in\mathbb{N}$$
has no solutions in integers for $y\in\{a,a+1,\cdots,b-1,b\},\  a\leq b,\ a,b\in\mathbb{Z}$ implies that, let $c=\lfloor f^{-1}(kn+a-1)\rfloor$, $$ f(c)\leq kn+a-1<kn+a<\cdots<kn+b<f(c+1)$$
for an integer $k$ such that $f$ has an inverse for all elements in $[kn+a-1,kn+b]\cap \mathbb{Z}$. Applying $f^{-1}$ on both sides we obtain
$$c\leq f^{-1}(kn+a-1)<f^{-1}(kn+a)<\cdots<f^{-1}(kn+a)<c+1$$
which implies 
$$\F{f^{-1}(kn+a-1)}=\F{f^{-1}(kn+a)}=\cdots=\F{ f^{-1}(kn+b-1)}=\F{ f^{-1}(kn+b)}.$$ 
\end{proof}
Let us prove Ramanujan's identity (2) using Proposition 2.
\subsection{Proof of (2) using Proposition 2}
\begin{proof}
Let us take a function $f(x):=(2x-1)^2$ defined on the interval $[1,+\infty)$. This function is increasing and it is invertible in its whole range. As
$$f(x)=(2x-1)^2\equiv 2\text{ (mod }4)$$
has no solutions in integers, from Proposition 2

$$\F{f^{-1}(4n+1)}=\F{f^{-1}(4n+2)}.$$
The inverse of $f$ is $f^{-1}(x):=\frac{1+\sqrt{x}}{2}$, therefore
$\F{\frac{1+\sqrt{4n+1}}{2}}=\F{\frac{1+\sqrt{4n+2}}{2}}$ or $$\F{\frac{1}{2}+\sqrt{n+\frac{1}{2}}}=\F{\frac{1}{2}+\sqrt{n+\frac{1}{4}}}.$$
\end{proof}
Let us look at some corollaries of Proposition 2. 
\subsection{Corollaries of Proposition 2}
\begin{G2C1}
Let $a,b$ be any two natural numbers and $n\geq \frac{2b}{a}$ be a natural number then 
\[\F{\frac{b+\sqrt{4n+1}}{a}}=\F{\frac{b+\sqrt{4n+2}}{a}}=\F{\frac{b+\sqrt{4n+3}}{a}}.\]
\end{G2C1}
\begin{proof}
Let $f(x):=(ax-b)^2$ defined on $[\frac{2b}{a},\infty)$ $\left( \text{here } f^{-1}(x)=\frac{b+\sqrt{x}}{a}\right)$, then as a square can only take a value equivalent to $0$ or $1$ modulo $4$, from Proposition 2 we have \[\F{\frac{b+\sqrt{4n+1}}{a}}=\F{\frac{b+\sqrt{4n+2}}{a}}=\F{\frac{b+\sqrt{4n+3}}{a}}.\]
\end{proof}
\begin{G2C2}
Let $a,b$ be any two natural numbers and $n$ be any natural number then we have
$$\F{\frac{b+\sqrt[3]{9n+1}}{a}}=\F{\frac{b+\sqrt[3]{9n+2}}{a}}=\cdots=\F{\frac{b+\sqrt[3]{9n+6}}{a}}=\F{\frac{b+\sqrt[3]{9n+7}}{a}}.$$
\end{G2C2}
\begin{proof}
Let $f(x):=(ax-b)^3$ for natural numbers $a,b$, defined on $\mathbb{Z}$,$\left( \text{here } f^{-1}(x)=\frac{b+\sqrt[3]{x}}{a}\right)$, then as a cube can be equivalent only to $0,1,8$ modulo $9$, from Proposition 2 we have \[\F{\frac{b+\sqrt[3]{9n+1}}{a}}=\F{\frac{b+\sqrt[3]{9n+2}}{a}}=\cdots=\F{\frac{b+\sqrt[3]{9n+6}}{a}}=\F{\frac{b+\sqrt[3]{9n+7}}{a}}.\]
\end{proof}
We can derive floor function identities involving logarithm using Proposition 2.
\begin{G2C3}
Let $a\neq 1,c,m\neq 1,b\geq m-1-c$ be natural numbers then for all natural numbers $n$,
$$\F{\frac{\log_a(a^{m}n+a^{m-1})-c}{b}}=\F{\frac{\log_a(a^{m}n+a^{m-1}+1)-c}{b}}=\cdots=\F{\frac{\log_a(a^{m}n+a^{m}-1)-c}{b}}.$$
\end{G2C3}
\begin{proof}
Let $f(x):=a^{bx+c}$ be defined on the interval $[0,+\infty)$ $\left(\text{here }f^{-1}(x)=\frac{\log_a(x)-c}{b}\right)$. As for any natural number $m\neq1$ the congruence
    $$f(x)\equiv y\text{ (mod }a^m)$$
    has no solutions for $y\in\{a^{m-1}+1,\cdots,a^{m}-1\}$ from Proposition 2 we must have $$\F{\frac{\log_a(a^mn+a^{m-1})-c}{b}}=\F{\frac{\log_a(a^{m}n+a^{m-1}+1)-c}{b}}=\cdots=\F{\frac{\log_a(a^{m}n+a^{m}-1)-c}{b}}.$$
\end{proof}
\section{Generalization of (3)}
Let us now prove Theorem 3. 
\subsection{Proof of Theorem 3}
We will prove it using three Lemmas.
\begin{Lemma 1}
For natural numbers $n,k>1$, we have $$\F{\sqrt[k]{2^kn+2^{k-1}-1}} =\F{\sqrt[k]{2^kn+2^{k-1}}}.$$
\end{Lemma 1}
\begin{proof}
 As $\F{\sqrt[k]{2^kn+2^{k-1}}}\geq \F{\sqrt[k]{2^kn+2^{k-1}-1}}$, for $\F{\sqrt[k]{2^kn+2^{k-1}}}\neq \F{\sqrt[k]{2^kn+2^{k-1}-1}}$ there must be exist an integer $m$ such that, $\sqrt[k]{2^kn+2^{k-1}}\geq m >\sqrt[k]{2^kn+2^{k-1}-1}$ or $2^kn+2^{k-1}\geq m^k> 2^kn+2^{k-1}-1$ which is only possible when $2^kn+2^{k-1}=m^k$ but as $2^{k-1}\mid(2^kn+2^{k-1})$ and $2^{k}\nmid (2^kn+2^{k-1})$ we can see that $2^kn+2^{k-1}$ can never be a perfect $k$th power and therefore cannot be equal to $m^k$. Hence  $\F{\sqrt[k]{2^kn+2^{k-1}-1}} =\F{\sqrt[k]{2^kn+2^{k-1}}}$. 
\end{proof}
\begin{Lemma 2}
For natural numbers $n,k>1$, such that $n\geq 2^{k-3}$ we have $$\sqrt[k]{2^kn+2^{k-1}-1}<\sqrt[k]{n}+\sqrt[k]{n+1}.$$
\end{Lemma 2}
\begin{proof}
    From \textit{AM-GM inequality} we have, 
    $\frac{\sqrt[k]{n}+\sqrt[k]{n+1}}{2}>\sqrt[2k]{n^2+n}.$
     As 
     $\sqrt[2k]{n^2+n}\geq\sqrt[k]{n+\frac{2^{k-1}-1}{2^k}}$
     when $n\geq 2^{k-3}$, we have
    $\frac{\sqrt[k]{n}+\sqrt[k]{n+1}}{2}>\sqrt[k]{n+\frac{2^{k-1}-1}{2^k}}$, multiplying both sides by $2$ we get 
    $$\sqrt[k]{2^kn+2^{k-1}-1}<\sqrt[k]{n}+\sqrt[k]{n+1}.$$
\end{proof}
\begin{Lemma 3}
For natural numbers $n,k>1$ we have $$\sqrt[k]{n}+\sqrt[k]{n+1}<\sqrt[k]{2^kn+2^{k-1}}.$$
\end{Lemma 3}
\begin{proof}
From \textit{generalized mean inequality} for $k$ exponent we have 
    \[\frac{\sqrt[k]{n}+\sqrt[k]{n+1}}{2}<\sqrt[k]{\frac{n + n + 1}{2}}=\sqrt[k]{n+\frac{1}{2}}.\]
    It was strict inequality as $\sqrt[k]{n}\neq \sqrt[k]{n+1}$. Multiply both sides by $2$ to get $$\sqrt[k]{n}+\sqrt[k]{n+1}<\sqrt[k]{2^kn+2^{k-1}}.$$
\end{proof}
Let us prove Theorem 3.
\begin{proof}
If $n\geq 2^{k-3}$ then from Lemma 11 and Lemma 12 we have $\sqrt[k]{2^kn+2^{k-1}-1}<\sqrt[k]{n}+\sqrt[k]{n+1}<\sqrt[k]{2^kn+2^{k-1}}$, from Lemma 10 we get $$\F{\sqrt[k]{n}+\sqrt[k]{n+1}} =\F{\sqrt[k]{2^kn+2^{k-1}}} =  \F{2\sqrt[k]{n+\frac{1}{2}}}.$$
If $n<2^{k-3}$, then as  $2<\sqrt[k]{n}+\sqrt[k]{n+1}<4$
and $2<\F{2\sqrt[k]{n+\frac{1}{2}}}<4$ we have $$\F{\sqrt[k]{n}+\sqrt[k]{n+1}},\ \F{ 2\sqrt[k]{n+\frac{1}{2}}}\in\{2,3\}.$$
but from Lemma 12 when $n<2^{k-3}$, the only possible exceptions theorem would occur when 
\begin{equation}\F{\sqrt[k]{n}+\sqrt[k]{n+1}}=2\quad\text{ and }\quad\F{ 2\sqrt[k]{n+\frac{1}{2}}}=3.\end{equation}
If $\quad\F{ 2\sqrt[k]{n+\frac{1}{2}}}=3$, then $2\sqrt[k]{n+\frac{1}{2}}\geq 3$ or $n\geq \left(\frac{3}{2}\right)^k-\frac 12$ and $\F{\sqrt[k]{n}+\sqrt[k]{n+1}}=2$ implies $n\leq \left(\frac{3}{2}\right)^k$.
Therefore (5) can happen only when $n\in\left[\left(\frac{3}{2}\right)^k-\frac 12,\left(\frac{3}{2}\right)^k\right)$ which implies
 $n=\F{\left(\frac 32\right)^k}.$
 
Suppose $n=\F{\left(\frac 32\right)^k}$ is an exception. From (5), $\sqrt[k]{n}+\sqrt[k]{n+1}<3$, using AM-GM inequality we get $2\sqrt[2k]{n(n+1)}<3$ or $n(n+1)<(\frac{3}{2})^{2k}$ or $(2n+1)^2<4(\frac{3}{2})^{2k}+1$. Hence \begin{equation*}
    \left(\frac{3}{2}\right)^k-\frac{1}{2}\leq n<\sqrt{\left(\frac{3}{2}\right)^{2k}+\frac{1}{4}}-\frac{1}{2}<\left(\frac{3}{2}\right)^k+\frac{1}{8\left(\frac{3}{2}\right)^k}-\frac{1}{2},
\end{equation*}
multiply both sides with $2$ and add $1$ on both sides to get
$$2\left(\frac{3}{2}\right)^k\leq 2n+1 <2\left(\frac{3}{2}\right)^k+\frac{1}{4\left(\frac{3}{2}\right)^k}.$$
Hence \begin{equation}|2\left(\frac{3}{2}\right)^k-(2n+1)|<\frac{1}{4\left(\frac{3}{2}\right)^k}.\end{equation} 
 Now we use (5) of Mahler\cite{Mahler}'s paper that for all $u>v\geq 2,\epsilon>0$ and $\vartheta$ any positive algebraic number the following holds for all but finitely $k$. \begin{equation}|\vartheta\left(\frac{u}{v}\right)^k-p^{\text{*}}|>e^{-\epsilon n},\end{equation}
 put  $u=3,\ v=2,\ \vartheta=2,\ \varepsilon=\log\frac 32$ in (7) to see that (6) can be true for finitely many 
. Therefore there can be at most finitely many exceptions $(k,n)$ with $k>1$ for which 
$$\F{\sqrt[k]{n}+\sqrt[k]{n+1}}\neq\F{2\sqrt[k]{n+\frac 12}}.$$
\end{proof}
\subsection{Proof of (3) using Theorem 3}
\begin{proof}
By setting $k=2$ in Theorem 3 we obtain (3) for all $n\geq 2^{k-3}=\frac{1}{2}$, or (3) is true for all natural numbers $n$.
\end{proof}
Let us now prove Theorem 4.
\subsection{Proof of Theorem 4}
We will prove this by proving following inequalities:
\begin{enumerate}[label=\textbf{\arabic*.}]
    \item $\F{\sqrt[k]{n+x_1}+\sqrt[k]{n+x_2}+\cdots+\sqrt[k]{n+x_l}}\leq\F{l\sqrt[k]{n+\frac{x_1+x_2+\cdots+x_l}{l}}}$
    \item $\sqrt[k]{n+x_1}+\sqrt[k]{n+x_2}+\cdots+\sqrt[k]{n+x_l}\geq l\sqrt[lk]{(n+x_1)(n+x_2)\cdots(n+x_l)}$
    \item For $n\geq\frac{l^{k-1}(x_1^2+\cdots+x_l^2)}{2}$ we have $l\sqrt[lk]{(n+x_1)(n+x_2)\cdots(n+x_l)}\geq l\sqrt[k]{n+\frac{x_1+x_2+\cdots+x_l}{l}-\frac{1}{l^k}}$
    \item $\F{l\sqrt[k]{n+\frac{x_1+x_2+\cdots+x_l}{l}}}=\F{l\sqrt[k]{n+\frac{x_1+x_2+\cdots+x_l}{l}-\frac{1}{l^k}}}$
    
\end{enumerate}
We can see that the above inequalities implies Theorem 4, as if
 $n\geq\frac{l^{k-1}(x_1^2+\cdots+x_l^2)}{2}$ then
\begin{align*}
    \F{\sqrt[k]{n+x_1}+\sqrt[k]{n+x_2}+\cdots+\sqrt[k]{n+x_l}} &\stackrel{\text{\textbf{2.}}}{\geq}\F{l\sqrt[k]{(n+x_1)(n+x_2)\cdots(n+x_l)}}\\
    &\stackrel{\text{\textbf{3.}}}{\geq}\F{l\sqrt[k]{n+\frac{x_1+x_2+\cdots+x_l}{l}-\frac{1}{l^k}}}\\
    &\stackrel{\text{\textbf{4.}}}{=}\F{l\sqrt[k]{n+\frac{x_1+x_2+\cdots+x_l}{l}}},
\end{align*}
 therefore, \begin{equation}\tag{\textbf{5.}}
 \F{\sqrt[k]{n+x_1}+\sqrt[k]{n+x_2}+\cdots+\sqrt[k]{n+x_l}}\geq \F{l\sqrt[k]{n+\frac{x_1+x_2+\cdots+x_l}{l}}}   
\end{equation}
Now \textbf{1.} and \textbf{5.} imply the desired result
$$\F{\sqrt[k]{n+x_1}+\sqrt[k]{n+x_2}+\cdots+\sqrt[k]{n+x_l}}=\F{l\sqrt[k]{n+\frac{x_1+x_2+\cdots+x_l}{l}}}.$$
Proofs of \textbf{1.} to \textbf{4.} are given:
\begin{enumerate}[label=\textbf{\arabic*.}]
\item Consider $f(x)=\sqrt[k]{n+x}$. As $f''(x)\leq 0$ from Jensen's inequality we have 
$$\sqrt[k]{n+x_1}+\sqrt[k]{n+x_2}+\cdots+\sqrt[k]{n+x_l}\leq l\sqrt[k]{n+\frac{x_1+x_2+\cdots+x_l}{l}}$$
and therefore 
$$\F{\sqrt[k]{n+x_1}+\sqrt[k]{n+x_2}+\cdots+\sqrt[k]{n+x_l}}\leq\F{l\sqrt[k]{n+\frac{x_1+x_2+\cdots+x_l}{l}}}.$$
\item This one follows from the \textit{AM-GM inequality}.
\item Let $N:=\frac{l^{k-1}(x_1^2+\cdots+x_l^2)}{2}$. Now for $n\geq N$ we have
$$\frac{(x_1^2+\cdots+x_l^2)}{2n^2}\leq \frac{1}{l^{k-1}n}\quad \text{ or }\quad -\frac{(x_1^2+\cdots+x_l^2)}{2n^2}\geq -\frac{1}{l^{k-1}n}.$$
Add $\frac{x_1+\cdots+x_l}{n}$ to both sides to obtain
$$\frac{x_1+\cdots+x_l}{n}-\frac{(x_1^2+\cdots+x_l^2)}{2n^2}\geq \frac{x_1+\cdots+x_l}{n}-\frac{1}{l^{k-1}n}$$
and therefore
$$\left(\frac{x_1}{n}-\frac{x_1^2}{2n^2}\right)+\left(\frac{x_2}{n}-\frac{x_2^2}{2n^2}\right)+\cdots+\left(\frac{x_l}{n}-\frac{x_l^2}{2n^2}\right)\geq l\left(\frac{\frac{x_1+\cdots+x_l}{l}-\frac{1}{l^k}}{n}\right).$$
Now, using elementary inequality of $\log(1+x)$  
$x-\frac{x^2}{2}\leq \log(1+x) \leq x,$ for $0<x\leq 1$ we have 
\begin{align*}
  \log\left(1+\frac{x_1}{n}\right)+\log\left(1+\frac{x_2}{n}\right)+\cdots+\log\left(1+\frac{x_l}{n}\right)&\geq \left(x_1-\frac{x_1^2}{2n^2}\right)+\cdots+\left(x_l-\frac{x_l^2}{2n^2}\right) \\ 
  &\geq l\left(\frac{\frac{x_1+\cdots+x_l}{l}-\frac{1}{l^k}}{n}\right)\\ 
  &\geq l\log\left(1+\frac{\frac{x_1+\cdots+x_l}{l}-\frac{1}{l^k}}{n}\right),
\end{align*}
Add $l\log n$ to both sides of inequality and exponentiate to get
$$(n+x_1)(n+x_2)\cdots(n+x_l)\geq \left(n+\frac{x_1+\cdots+x_l}{l}-\frac{1}{l^k} \right)^l.$$
Now take $\frac{1}{lk}$ powers on both sides and multiply them by $l$ to get the desired result.
\item We claim that  $l\sqrt[k]{n+\frac{x_1+x_2+\cdots+x_l}{l}}$ is never an integer. Note that 
$$l\sqrt[k]{n+\frac{x_1+x_2+\cdots+x_l}{l}}=\sqrt[k]{l^{k-1}(ln+x_1+x_2+\cdots+x_l)}.$$
Now let $\nu_p(a)$ denote the highest power of $p$ that divides $a$, and let $\nu_p(l)=m$. Since $p\ |\ l$ and $\ p^k\nmid l$ we have $1\leq m < k$. As $p\nmid (x_1+x_2+\cdots+x_l)$ and $p\ |\ ln$ we must have $p\nmid (ln+x_1+x_2+\cdots+x_l)$, so 
$$\nu_p(l^{k-1}(ln+x_1+x_2+\cdots+x_l))=(k-1)m.$$
Now $k\nmid (k-1)m$ as $k\ |\ (k-1)m$ implies that $k\ |\ m$, which is not possible as $1\leq m<k$. Hence 
$$k\nmid \nu_p(l^{k-1}(ln+x_1+x_2+\cdots+x_l))$$
so $l^k-1(ln+x_1+x_2+\cdots+x_l)$ cannot be a perfect $k$-th power and consequently $l\sqrt[k]{n+\frac{x_1+x_2+\cdots+x_l}{l}}$ cannot be an integer.

Now let us suppose for the sake of contradiction that \textbf{4.} is not true and therefore there exists an integer $a$ such that
$$ l\sqrt[k]{n+\frac{x_1+x_2+\cdots+x_l}{l}-\frac{1}{l^k}}<a\leq l\sqrt[k]{n+\frac{x_1+x_2+\cdots+x_l}{l}},$$
but we just proved that RHS can't be an integer, so we can say that 
$$ l\sqrt[k]{n+\frac{x_1+x_2+\cdots+x_l}{l}-\frac{1}{l^k}}<a< l\sqrt[k]{n+\frac{x_1+x_2+\cdots+x_l}{l}}.$$
Now raise all expressions to the $k$-th power to get
\begin{equation}\tag{$\star$}
l^kn+l^{k-1}(x_1+x_2+\cdots+x_l)-1<a^k<l^k+l^{k-1}(x_1+x_2+\cdots+x_l)    
\end{equation}
In ($\star$) LHS and RHS are consecutive integers and $a^k$ is also an integer, but since between two consecutive integers there cannot be an integer $(\star)$ is impossible. Hence \textbf{4.} is true.
\end{enumerate}

\section{Acknowledgments}
We would like to thank anonymous user of mathoverflow user142929 who partly conjectured Theorem 3.

\end{document}